\documentclass[12pt,a4paper]{amsart}

\usepackage[utf8]{inputenc}




\setlength{\textwidth}{15.83cm}
\setlength{\hoffset}{-1.5cm}

\usepackage{mathrsfs}
\usepackage{todonotes}

\usepackage{amssymb, tikz}

\usepackage{hyperref}

\usepackage{amsmath}

\newtheorem{theorem}{Theorem}[section]

\newtheorem{lemma}[theorem]{Lemma}

\theoremstyle{definition}

\theoremstyle{remark}
\newtheorem{remark}[theorem]{Remark}
\newtheorem{question}[theorem]{Question}

\newcommand{\Ord}{{\rm Ord}}

\newcommand{\pcf}{{\rm pcf}}
\newcommand{\Reg}{{\rm Reg}}
\newcommand{\Add}{{\rm Add}}

\newcommand{\bbR}{{\mathbb R}}

\newcommand{\bbP}{{\mathbb P}}

\newcommand{\bbQ}{{\mathbb Q}}

\newcommand{\cf}{{\rm cf}}

\newcount\skewfactor
\def\mathunderaccent#1#2 {\let\theaccent#1\skewfactor#2
\mathpalette\putaccentunder}
\def\putaccentunder#1#2{\oalign{$#1#2$\crcr\hidewidth
\vbox to.2ex{\hbox{$#1\skew\skewfactor\theaccent{}$}\vss}\hidewidth}}
\def\name{\mathunderaccent\tilde-3 }

\begin{document}

\title[Unlimited
accumulation by the Shelah's PCF operator]{Unlimited
accumulation by the Shelah's PCF operator}

\author {Mohammad Golshani}
\address{School of Mathematics\\
 Institute for Research in Fundamental Sciences (IPM)\\
  P.O. Box:
19395-5746\\
 Tehran-Iran.}
\email{golshani.m@gmail.com}
\urladdr{http://math.ipm.ac.ir/~golshani/}

\thanks{The  author's research has been supported by a grant from IPM (No. 1401030417). He thanks the referee of the paper
for his many useful remarks and suggestions that improved the presentation of the paper.}




\keywords {pcf theory, supercompact cardinals, Radin forcing.}

\begin{abstract}
 Modulo the existence of large cardinals, there is a model of set theory in which for some set $B$ of regular cardinals,
 the sequence  $\langle  \pcf^\alpha(B): \alpha \in \Ord     \rangle$ is strictly increasing. The result answers a question from \cite{kenta}.
\end{abstract}

\maketitle
\setcounter{section}{-1}


\section{Introduction}
Pcf theory was developed by Shelah, see \cite{shelah},  to answer some cardinal arithmetic questions, though it later found wider applications in
set theory and other parts of mathematics. A key concept in pcf theory is that of $\pcf(A)$, for a set $A$ of regular cardinals, which is defined as
\[
\pcf(A)=\{\cf(\prod A, <_D): D \text{~an ultrafilter on~} A   \}.
\]
If the set $A$ is an interval of regular cardinals and is progressive, meaning that $\min(A) > |A|$, then the set $\pcf(A)$ behaves nicely,
 in particular, it satisfies the following closure property:
\[
\pcf(\pcf(A))=\pcf(A).
 \]
 In the absence of such assumptions on the set $A$, the set
$\pcf(A)$ lacks some of its nice properties, see for example \cite{jech} and \cite{kenta}.

Let $\Reg$ denote the class of regular cardinals. For a set $A \subseteq \Reg$ let us define the increasing sequence
 $\langle  \pcf^\alpha(A): \alpha \in \Ord     \rangle$ by induction on $\alpha$ as follows:
\begin{center}
 $\pcf^{\alpha}(A)=$ $\left\{
\begin{array}{l}
         A \hspace{3.5cm} \text{ if } \alpha=0,\\
         \pcf(\pcf^\beta(A))  \hspace{1.5cm} \text{ if } \alpha=\beta+1,\\
         \bigcup_{\beta < \alpha} \pcf^\beta(A) \hspace{1.4cm} \text{ if } \lim(\alpha).
     \end{array} \right.$
\end{center}
In \cite{kenta}, Tsukuura showed that if there are infinitely many supercompact cardinals, then in a forcing extension, for a set $A$ of regular cardinals, the sequence $\langle  \pcf^n(A): n<\omega     \rangle$
can be strictly increasing and he raised the following question.

\begin{question}
\label{a1} Is it a theorem of ZFC that for every $A \subseteq \Reg$ there is an $\alpha$ such that
$\pcf^{\alpha+1}(A)=\pcf^\alpha(A)$.
\end{question}
In this paper we give a consistent negative answer to this question, by proving the following theorem.
\begin{theorem}
\label{a2} Assume $\langle \kappa_n: n<\omega \rangle $ is an increasing sequence of  supercompact  cardinals  and  the set $A=\{\alpha < \kappa_0: \alpha$ is $< \kappa_0$-supercompact    $ \}$ belongs to any normal measure on $\kappa_0$. Then there exist  a model $\bold M$ of set theory and an inaccessible cardinal $\theta$, such that in $\bold M,$ the sequence
$\langle  \pcf^\alpha(\theta \cap \Reg): \alpha \in \Ord     \rangle$
is strictly increasing.
\end{theorem}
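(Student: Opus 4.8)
The plan is to realize $\bold M$ as a Radin extension of $V$ in which $\theta$ is an inaccessible cardinal whose regular cardinals below it, together with the cardinal arithmetic just above it, are arranged into a self-similar configuration that the $\pcf$ operator is forced to climb without ever stabilizing. Since at a limit stage $\alpha$ we have $\pcf^\alpha(\theta\cap\Reg)=\bigcup_{\beta<\alpha}\pcf^\beta(\theta\cap\Reg)$, which strictly contains each earlier term as soon as the sequence was strictly increasing below $\alpha$, the whole problem reduces to the successor step: I must guarantee, for every ordinal $\alpha$, that $\pcf(\pcf^\alpha(\theta\cap\Reg))\neq\pcf^\alpha(\theta\cap\Reg)$, i.e.\ that one application of $\pcf$ always produces a genuinely new regular cardinal. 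Writing $B_\alpha=\pcf^\alpha(\theta\cap\Reg)$, the target is thus an always-present witness $\mu_\alpha\in\pcf(B_\alpha)\setminus B_\alpha$.

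First I would run a preparatory iteration using the supercompacts $\langle\kappa_n:n<\omega\rangle$ and the hypothesis that $A=\{\alpha<\kappa_0:\alpha \text{ is }<\kappa_0\text{-supercompact}\}$ lies in every normal measure on $\kappa_0$. The role of this hypothesis is to supply, after a Laver-style preparation rendering the relevant supercompactness indestructible, a coherent sequence of measures on $\kappa_0$ concentrating on $A$; this is precisely the data needed to define a Radin forcing $\mathbb{R}$ on $\kappa_0$. Forcing with $\mathbb{R}$ adds a club $C\subseteq\kappa_0$ whose points retain enough of their largeness that a suitable inaccessible $\theta$ survives (for definiteness one may aim for $\theta=\kappa_0$ itself), while the members of $C$ become singular cardinals of a prescribed cofinality that are limits of regular cardinals below them. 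Simultaneously I would control the power function, forcing $\operatorname{pp}$ to be large (equivalently, violating $\mathrm{SCH}$ in a designed pattern) at a class of singular cardinals at and above $\theta$, so that the $\pcf$ of a cofinal sequence of regulars below such a singular overshoots its successor.

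With this configuration in place, the heart of the argument is the $\pcf$ computation. At stage $\alpha$ the set $B_\alpha$ is a non-progressive set of regular cardinals (its minimum is $\aleph_0$ while its cardinality is enormous), and its supremum $\eta_\alpha$ is a singular limit cardinal that is a limit of elements of $B_\alpha$. Choosing a cofinal sequence $\langle\lambda_i:i<\cf(\eta_\alpha)\rangle$ inside $B_\alpha$ and an appropriate ultrafilter $D$, I would compute $\tcf(\prod_i\lambda_i/D)$ and show, using the engineered failure of the progressivity bounds (the large value of $\operatorname{pp}$), that this true cofinality is a regular cardinal $\mu_\alpha>\eta_\alpha$ lying \emph{outside} $B_\alpha$; hence $\mu_\alpha\in\pcf(B_\alpha)\setminus B_\alpha$ and the sequence strictly increases at $\alpha$. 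The decisive point is that the Radin club and the reflecting embeddings witnessing the original supercompactness make this configuration self-similar: the block of regular cardinals and the power-function pattern responsible for the jump at $\eta_\alpha$ reproduce themselves above $\mu_\alpha$, so a single forced ``engine'' is recycled to yield a witness at every one of the proper-class-many successor stages, even though only $\omega$ supercompacts were consumed.

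The main obstacle I anticipate is precisely this global, proper-class bookkeeping of the cardinal arithmetic: I must arrange the power function and the $\pcf$ generators so that a new regular cardinal appears at \emph{every} successor stage $\alpha\in\Ord$, while keeping $\theta$ inaccessible and ensuring the configuration genuinely repeats above each newly produced $\mu_\alpha$ rather than collapsing or stabilizing. Controlling $\pcf$ of non-progressive sets is notoriously delicate, since the usual structure theory (transitive generators, localization, the no-hole theorem, and the bound $\max\pcf(A)<|A|^{+4}$) is unavailable; so the verification that $\tcf(\prod_i\lambda_i/D)$ lands exactly where intended, and that no unintended collapses disturb the self-similarity at limit stages, is where the real work lies. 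Granting that this self-similar Radin configuration can be forced from the stated hypothesis, the strict increase of $\langle\pcf^\alpha(\theta\cap\Reg):\alpha\in\Ord\rangle$ then follows from the successor-stage witnesses together with the limit-stage union.
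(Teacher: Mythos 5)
Your reduction to the successor step is correct, and the overall scaffolding (Laver-style preparation exploiting that $A$ is in every normal measure, Radin forcing on $\kappa_0$, controlling the power function along the club) matches the paper. But the engine you propose for the successor step is not the one that works here, and as described it has a gap. You assert that at every stage $\sup B_\alpha$ is a \emph{singular} limit of members of $B_\alpha$ and that the new cardinal $\mu_\alpha$ is produced by a $\operatorname{pp}$/failure-of-SCH computation of $\tcf(\prod_i\lambda_i/D)$. In the actual construction $\pcf^\alpha(B)$ at successor stages is an interval of the form $\beth_n(\theta_\delta)^+\cap\Reg$ whose \emph{largest element is a measurable cardinal}, so the ``singular supremum'' mechanism does not even apply there; and more importantly, finding one true cofinality above the supremum only gives a lower bound on $\pcf(B_\alpha)$, whereas the induction requires knowing $B_{\alpha+1}=\pcf(B_\alpha)$ \emph{exactly} so that the configuration can be re-used at the next stage. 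The paper's engine is Lemma \ref{a4}(1): if $\mu$ is measurable then $\pcf(\mu\cap\Reg)=(2^\mu)^+\cap\Reg$, an exact two-sided computation. The whole point of the indestructibility preparation, the Cohen products $\Add(\zeta_{\beta,j},\zeta_{\beta,j+1})$, and the Foreman--Woodin factorization of the Radin forcing is to exhibit, for each successor stage, an intermediate model in which the relevant $\beth_n(\theta_\delta)$ is still measurable with $2^{\beth_n(\theta_\delta)}=\beth_{n+1}(\theta_\delta)$, and then to push the computed value of $\pcf$ up to the final model by chain-condition arguments (Lemma \ref{a4}(2)) and down by closure. Relatedly, your statement that the Radin club points ``become singular cardinals of a prescribed cofinality'' runs against what is needed: the club points and the cardinals $\zeta_{\beta,i}$ between them must \emph{retain} measurability (indestructibly), since measurability is what drives every jump.

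The second gap is the one you yourself flag as the main obstacle --- arranging the self-similar configuration at proper-class-many stages --- and it is not resolved by anything in your plan; taking $\theta=\kappa_0$ in the full Radin extension would indeed seem to require class-many large cardinals. The paper's resolution is a truncation: the final model is $\bold M=\bold V^3_{\kappa_0}$, the rank initial segment at $\kappa_0$, and $\theta=\theta_0$ is the \emph{first} point of the Radin club. Inside $\bold M$ the ordinals are exactly $\kappa_0$, so one only needs the sequence $\langle\pcf^\alpha(\theta_0\cap\Reg):\alpha<\kappa_0\rangle$ to be strictly increasing in $\bold V^3$, and the $\kappa_0$-many measurables required for the successor steps are supplied by the measure-one set $A''$ of prepared cardinals that the Radin club is forced to run through. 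Without this truncation (or some substitute for it) your argument cannot get past boundedly many stages, so this is a missing idea rather than mere bookkeeping.
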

\begin{remark}
Note that the set $\theta \cap \Reg$ is an interval of regular cardinals, however $|\theta \cap \Reg|=\theta > \min(\theta \cap \Reg)$, so $\theta \cap \Reg$ is not progressive.
\end{remark}
The basic idea of the proof is  to find a forcing extension of  the universe in which $\kappa_0$ remains inaccessible, and for all large enough cardinals
$\lambda < \kappa,$  $2^\lambda$ was measurable in the ground model, and its measurability is further preserved by some suitably forcing notions.
The main technical idea of the proof is the use of Radin forcing as developed by Foreman and Woodin \cite{foreman}. This makes the construction works by allowing
the process to proceed at limit points.

Our notation is standard. Given a forcing notion $\bbP$, let $\text{RO}(\bbP)$ denote its Boolean completion.  We also use the symbol $\simeq$ for the equivalence of forcing notions, thus given two forcing notions
$\bbP$ and $\bbQ$,
$\bbP \simeq \bbQ$ means that $\text{RO}(\bbP)$ is isomorphic to  $\text{RO}(\bbQ)$.

\section{Some preliminaries}
\label{s1}
In this section we present a few lemmas that we will need for the proof of
Theorem \ref{a2}. The first result is Laver's indestructibility result for supercompact cardinals.

\begin{lemma} (Laver \cite{laver})
\label{a3} Suppose $\kappa$ is a supercompact cardinal and $\lambda < \kappa$.
Then there exists a $\lambda^+$-directed closed $\kappa$-c.c. forcing notion
$\mathbb{L}(\lambda, \kappa)$ of size $\kappa$
such that in $V^{\mathbb{L}(\lambda, \kappa)}$, the supercompactness of $\kappa$
remains indestructible under $\kappa$-directed closed forcing notions.
\end{lemma}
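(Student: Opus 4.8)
The plan is to prove this by the \emph{Laver preparation}, and the whole argument rests on the existence of a \emph{Laver function}, which I would establish first. Concretely, I would show there is a function $\ell \colon \kappa \to V_\kappa$ such that for every set $x$ and every cardinal $\mu$ there is an elementary embedding $j \colon V \to M$ with critical point $\kappa$, $j(\kappa) > \mu$, ${}^\mu M \subseteq M$, and $j(\ell)(\kappa) = x$. I would prove this by contradiction: assuming no such $\ell$ exists, build $f$ by recursion on $\alpha < \kappa$, letting $f(\alpha)$ code a \emph{least} witness (minimizing the target cardinal $\mu$ first) to the failure of $f \restriction \alpha$ to be a Laver function for $\alpha$ in place of $\kappa$, and trivial when there is no such witness. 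Since $f$ itself fails at $\kappa$, fix a least counterexample $(x,\mu)$ and a $\mu$-supercompactness embedding $j \colon V \to M$ realizing it; by elementarity the recursion computes $j(f)(\kappa)$ in $M$ as the $M$-least counterexample for $f = j(f)\restriction\kappa$, which must then equal $(x,\mu)$, forcing $j(f)(\kappa) = x$ and contradicting the choice of the counterexample. This reflection argument is the cleverest ingredient and I expect it to be the main obstacle.

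Next I would define $\mathbb{L}(\lambda,\kappa)$ as a reverse Easton (Easton-support) iteration $\langle P_\alpha, \dot Q_\alpha : \alpha \le \kappa\rangle$ of length $\kappa$ guided by $\ell$. At a stage $\alpha$ with $\lambda < \alpha$, if $\ell(\alpha)$ happens to be a $P_\alpha$-name for an $\alpha$-directed-closed forcing notion I set $\dot Q_\alpha = \ell(\alpha)$; otherwise $\dot Q_\alpha$ is trivial, and I force trivially at every stage $\alpha \le \lambda$. Then $\mathbb{L}(\lambda,\kappa) := P_\kappa$. Since every nontrivial iterand is forced to be $\alpha$-directed closed with $\alpha > \lambda$, hence at least $\lambda^+$-directed closed, a routine induction on the length shows the whole iteration is $\lambda^+$-directed closed. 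Using that $\kappa$ is inaccessible (indeed Mahlo) together with a $\Delta$-system argument on the Easton supports, I would verify that $P_\kappa$ satisfies the $\kappa$-c.c., and a bookkeeping count of the names involved gives $|P_\kappa| = \kappa$.

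Finally, for indestructibility, work in $V^{P_\kappa}$ with $G$ a $P_\kappa$-generic, fix a $\kappa$-directed-closed $\dot Q$ and a target $\theta$, and let $H$ be $\dot Q$-generic over $V[G]$. Choosing $\mu$ large enough to capture $\theta$ and $\dot Q$, I apply the Laver property to obtain $j \colon V \to M$ with critical point $\kappa$, ${}^\mu M \subseteq M$, and $j(\ell)(\kappa)$ equal to a name for $\dot Q$. By elementarity $j(P_\kappa)$ factors as $P_\kappa * \dot Q * \dot R$, where the definition at stage $\kappa$ reads off $\dot Q$ from $j(\ell)(\kappa)$ and $\dot R$ is the tail, forced to be $j(\kappa)$-directed closed in $M[G*H]$. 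The remaining task is to lift $j$ to $j \colon V[G][H] \to M[G*H*G_{\mathrm{tail}}]$: I would construct $G_{\mathrm{tail}}$ by diagonalization inside $M[G*H]$, where the high closure of $\dot R$ together with the $\mu$-closure of $M$ bounds the family of dense sets to be met, and I would arrange $j''(G*H) \subseteq G_{\mathrm{tail}}$ by forcing below a master condition, which exists exactly because $\dot R$ is sufficiently directed closed and $j''(G*H)$ is a directed set of small size. The lifted embedding witnesses $\theta$-supercompactness of $\kappa$ in $V[G][H]$, and since $\theta$ and $\dot Q$ were arbitrary, $\kappa$ is supercompact there and its supercompactness is indestructible under $\kappa$-directed closed forcing, as required.
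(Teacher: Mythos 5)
The paper does not prove this lemma at all: it is quoted verbatim as Laver's theorem, with \cite{laver} as the proof. So there is nothing in the paper to compare against, and the only question is whether your reconstruction of Laver's argument is sound. Your outline is the standard one (Laver function by a least-counterexample reflection argument, reverse Easton preparation guided by it, lifting via a master condition), and the closure, chain condition, and cardinality computations for $\mathbb{L}(\lambda,\kappa)$ are all fine. One remark on the Laver function: to make the reflection argument work you need $M$ to be correct about ``$(x,\mu)$ is a counterexample for $f$,'' which is a statement quantifying over embeddings; the standard fix is to take $j$ witnessing $\lambda$-supercompactness for $\lambda$ much larger than $\mu$ and $|{\rm TC}(x)|$ and to code embeddings by supercompactness measures, so that the statement becomes sufficiently local. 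You gesture at this but should say it.

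There is, however, one genuine gap in the lifting step as you have literally defined the iteration. You set $\dot Q_\alpha=\ell(\alpha)$ at \emph{every} stage $\alpha>\lambda$ where $\ell(\alpha)$ is a name for an $\alpha$-directed closed poset. Applying $j$, the tail $\dot R$ of $j(P_\kappa)$ beyond the stage-$\kappa$ forcing is then only as directed closed as its first nontrivial stage $\alpha_0\in(\kappa,j(\kappa))$, and nothing in your construction prevents $\alpha_0$ from being far below $\mu$ (the values of $j(\ell)$ on $(\kappa,\mu]$ are not under your control). In that case $\dot R$ need not be closed enough either to admit a master condition below $j''(G*H)$, which has size up to $\mu$, or to support the diagonalization against the roughly $2^\mu$ many dense sets of $M[G*H]$. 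Laver's own remedy is to let $\ell(\alpha)$ anticipate a \emph{pair} $(x,\gamma)$ and to define the iteration so that all stages in $(\alpha,\gamma]$ are trivial whenever stage $\alpha$ is nontrivial with bound $\gamma$; choosing $j$ with $j(\ell)(\kappa)=(\dot Q,\mu)$ then forces the first nontrivial stage of the tail above $\mu$, giving the $\mu^+$-directed closure your argument needs. With that modification (and the corresponding adjustment to the statement of the Laver function) your proof goes through.
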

The next lemma shows that for a measurable cardinal $\kappa$, the set $\pcf(\kappa \cap \Reg)$ is already large and
also discusses the structure of $\pcf(A)$ in nice forcing extensions.
\begin{lemma} (see \cite{kenta})
\label{a4}
\begin{enumerate}
\item  If $\kappa$ is a measurable cardinal, then $\pcf(\kappa \cap \Reg)= (2^\kappa)^+ \cap \Reg$,

\item Suppose $A \subseteq \Reg$, $\lambda=\min(A)$ and $\bbQ$ is a $\lambda$-c.c.c forcing notion. Then
$\pcf^V(A) \subseteq \pcf^{V[\bold G_{\bbQ}]}(A).$
\end{enumerate}
\end{lemma}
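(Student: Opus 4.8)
The plan is to treat the two parts separately, since they rest on different machinery. Throughout part (1) I fix a normal measure $U$ on $\kappa$ and write $A = \kappa \cap \Reg$. The inclusion $\pcf(A) \subseteq (2^\kappa)^+ \cap \Reg$ is immediate from the cardinality bound $|\prod A| \le \kappa^\kappa = 2^\kappa$: for any ultrafilter $D$ on $A$ the linear order $(\prod A, <_D)$ has size at most $2^\kappa$, so its cofinality is a regular cardinal $\le 2^\kappa$, that is, an element of $(2^\kappa)^+ \cap \Reg$. For the reverse inclusion I would show that every regular $\nu \le 2^\kappa$ is realized as $\cf(\prod A, <_D)$ for a suitable $D$, splitting into three ranges. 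Regular $\nu < \kappa$ are trivial, since $A \subseteq \pcf(A)$ via the principal ultrafilter concentrated on $\nu \in A$. For $\nu = \kappa$ I would use that a normal measure concentrates on the inaccessible cardinals below $\kappa$, so $A \in U$ and $U \restriction A$ is an ultrafilter on $A$; since every $f \in \prod A$ is regressive and hence, by normality, constant mod $U$, the order $(\prod A, <_{U \restriction A})$ is identified with $\kappa$, giving cofinality $\kappa$.

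The remaining range $\kappa < \nu \le 2^\kappa$ is the technical heart. Here one must manufacture, for each such regular $\nu$, an ultrafilter $D_\nu$ on a measure-one set of inaccessibles below $\kappa$ whose ultrapower $\prod A / D_\nu$ has cofinality exactly $\nu$. The guiding point is that $|\prod A| = 2^\kappa \ge \nu$ leaves enough room, while dropping below full $\kappa$-completeness is what forces the cofinality above $\kappa$, and the measurability of $\kappa$ supplies the flexibility to tune it to the prescribed value. I expect this construction of ultrafilters with prescribed ultrapower cofinality to be the main obstacle, and it is the step I would lean on \cite{kenta} for.

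For part (2), let $G$ be $\bbQ$-generic and fix $\mu \in \pcf^V(A)$, witnessed in $V$ by an ultrafilter $D$ on $A$ together with a $<_D$-increasing cofinal sequence $\langle f_\xi : \xi < \mu \rangle$ in $(\prod A, <_D)$. The decisive observation is a domination lemma: every $g \in (\prod A)^{V[G]}$ is dominated pointwise by some $h \in (\prod A)^V$. Indeed, for each $a \in A$ the $\lambda$-c.c.\ yields an antichain of size $< \lambda$ deciding $\dot g(a)$, hence a set $S_a \in V$ of fewer than $\lambda$ possible values, all $< a$; since $a \ge \lambda = \min(A)$ is regular, $h(a) := \sup S_a < a$, and $g(a) \le h(a)$ for every $a$. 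Now extend $D$ (still a filter in $V[G]$) to an ultrafilter $D' \in V[G]$ on $A$. The map $\xi \mapsto [f_\xi]_{D'}$ remains strictly increasing because $<_D \,\subseteq\, <_{D'}$, and it remains cofinal: given $g \in (\prod A)^{V[G]}$, choose $h \in (\prod A)^V$ dominating it pointwise and then $\xi$ with $h <_D f_\xi$, whence $\{a : g(a) < f_\xi(a)\} \supseteq \{a : h(a) < f_\xi(a)\} \in D \subseteq D'$.

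It remains to read off the cofinality. Each $a \in A$ is regular and $\ge \lambda$, and a $\lambda$-c.c.\ forcing preserves cofinalities $\ge \lambda$, so $A \subseteq \Reg^{V[G]}$ and $\pcf^{V[G]}(A)$ is meaningful; for the same reason $\mu$, being $\ge \min(A) = \lambda$, stays regular in $V[G]$. Since $\langle f_\xi : \xi < \mu \rangle$ is a strictly $<_{D'}$-increasing cofinal sequence of regular length $\mu$ in $(\prod A, <_{D'})^{V[G]}$, the order $\prod A / D'$ has cofinality exactly $\mu$, and therefore $\mu \in \pcf^{V[G]}(A)$, as required.
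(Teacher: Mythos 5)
The paper gives no proof of this lemma at all---both clauses are imported from Tsukuura \cite{kenta}---so the only question is whether your argument stands on its own. Your part (2) does: dominating each $g\in(\prod A)^{V[\bold G_{\bbQ}]}$ pointwise by a ground-model function read off from maximal antichains of size $<\lambda$ (using that each $a\in A$ is regular and $\ge\lambda$), extending $D$ to an ultrafilter $D'$ of the extension, and observing that a $\lambda$-c.c.\ forcing preserves the regularity of every cardinal $\ge\lambda$ (hence of $\mu$ and of the members of $A$) is exactly the standard argument, and every detail you supply is correct.

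Part (1), however, is not a proof. The inclusion $\pcf(\kappa\cap\Reg)\subseteq(2^\kappa)^+\cap\Reg$ and the realization of the regular $\nu\le\kappa$ (principal ultrafilters for $\nu<\kappa$; normality plus Fodor for $\nu=\kappa$) are correct but routine. The entire content of the clause is the reverse inclusion on the interval $(\kappa,2^\kappa]$: one must construct, for each regular $\nu$ with $\kappa<\nu\le 2^\kappa$, an ultrafilter $D$ on $\kappa\cap\Reg$ with $\cf(\prod(\kappa\cap\Reg),<_D)=\nu$, and you explicitly defer exactly this step to the reference. The heuristic you offer---that $|\prod A|\ge\nu$ ``leaves enough room''---is not an argument; the cardinality of the product says nothing about which cofinalities are attained (with a normal measure the cofinality collapses all the way to $\kappa$, as you yourself note). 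The missing construction is Jech's \cite{jech}, which builds the ultrafilter together with a $<_D$-increasing cofinal sequence of length $\nu$, using the $\kappa$-completeness of a measure on $\kappa$ in an essential way; until that is supplied, the substantive direction of (1) is unproved. Since the paper itself only cites \cite{kenta} here, you are no worse off than the text, but as a self-contained proof the proposal is incomplete.
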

The next lemma is due to Shelah.
\begin{lemma} (see \cite{shelah})
\label{a5} Suppose $\lambda$ is a singular cardinal. Then $\lambda^+ \in \pcf(\lambda \cap \Reg)$.
\end{lemma}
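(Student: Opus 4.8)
My plan is to reduce the statement to the existence of a suitable \emph{scale} and to isolate the true cofinality computation as the essential point. First I would fix $\kappa=\cf(\lambda)<\lambda$ and an increasing sequence $\langle\lambda_i:i<\kappa\rangle$ of regular cardinals cofinal in $\lambda$ with $\lambda_0>\kappa$, which exists because a singular $\lambda$ is a limit cardinal. Writing $A=\{\lambda_i:i<\kappa\}\subseteq\lambda\cap\Reg$, the monotonicity of $\pcf$ — if $A\subseteq B$ then every ultrafilter on $A$ extends to one on $B$ concentrating on $A$, and the two quotient linear orders coincide, so $\pcf(A)\subseteq\pcf(B)$ — reduces the problem to showing $\lambda^+\in\pcf(A)$. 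Note that $A$ is progressive, since $|A|=\kappa<\lambda_0=\min(A)$, so the structural theory of $\pcf$ is available for $A$.

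The lower bound is the easy half. I would check that for every ultrafilter $D$ on $A$ extending the co-bounded (Fr\'echet) filter one has $\cf(\prod A,<_D)>\lambda$. Indeed, if $F\subseteq\prod A$ had size $<\lambda$, then $Z=\{i<\kappa:\lambda_i>|F|\}$ is co-bounded, hence in $D$, and the function $g$ given on $Z$ by $g(i)=\bigl(\sup_{f\in F}f(i)\bigr)+1$ — a successor of a supremum of fewer than $\lambda_i$ ordinals below the regular cardinal $\lambda_i$, so $g(i)<\lambda_i$ — satisfies $g>_D f$ for all $f\in F$; thus no such $F$ is cofinal and $\cf(\prod A,<_D)\ge\lambda$. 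Since this cofinality is a regular cardinal while $\lambda$ is singular, it is in fact $\ge\lambda^+$. Hence every value of $\pcf(A)$ contributed by a co-bounded ultrafilter is $\ge\lambda^+$, and $\lambda^+$ is the natural candidate for the least of them.

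The heart of the matter is \emph{attainment}: exhibiting $B\subseteq A$ with $\max\pcf(B)=\lambda^+$, equivalently a scale $\langle f_\alpha:\alpha<\lambda^+\rangle$ that is $<_{J^{\mathrm{bd}}}$-increasing and cofinal in $\prod B$, so that $\tcf(\prod B,<_{J^{\mathrm{bd}}})=\lambda^+$ and therefore $\lambda^+\in\pcf(B)\subseteq\pcf(A)$. Here one should notice that merely building a $<_{J^{\mathrm{bd}}}$-increasing sequence of length $\lambda^+$ is painless: every $\delta<\lambda^+$ has $\cf(\delta)<\lambda$ (a regular cardinal $\le\lambda$ cannot equal the singular $\lambda$), so the coordinatewise supremum of $\langle f_\beta:\beta<\delta\rangle$ lies in the product modulo a bounded set and serves as an upper bound. \textbf{The main obstacle is cofinality}, i.e.\ forcing the true cofinality of the product to be \emph{exactly} $\lambda^+$ rather than some larger $\max\pcf$: this requires choosing $B$ (thinning the sequence) so as to kill the larger values, which is precisely what Shelah's apparatus of the ideals $J_{<\mu}(A)$ and their generators accomplishes. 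I expect to carry this out following Shelah, using the existence and transitivity of generators together with the attainment (no-hole) results to produce the generator $B_{\lambda^+}$ with $\max\pcf(B_{\lambda^+})=\lambda^+$, which yields the desired ultrafilter and completes the proof.
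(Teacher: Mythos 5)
The paper offers no proof of this lemma at all: it is imported verbatim from Shelah's \emph{Cardinal Arithmetic} with the bare citation ``(see \cite{shelah})'', so there is no in-paper argument to compare yours against. Your preliminary reductions are correct and standard: passing to a progressive set $A=\{\lambda_i:i<\kappa\}$ cofinal in $\lambda$ with $\kappa=\cf(\lambda)<\lambda_0=\min(A)$, invoking monotonicity of $\pcf$, and verifying that every ultrafilter $D$ extending the cobounded filter gives $\cf(\prod A,<_D)\ge\lambda^+$ (no family of size $<\lambda$ can be cofinal, and the cofinality is regular while $\lambda$ is not). The observation that $\prod A/J^{\mathrm{bd}}$ is $\lambda^+$-directed is also right.

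The gap is that the proposal stops exactly where the theorem begins: the \emph{attainment} of $\lambda^+$ is the entire content of the lemma, and you do not prove it, you defer it back to the very machinery of Shelah that the paper is citing. Moreover, the specific plan is circular as phrased: a generator $B_{\lambda^+}$ with $\max\pcf(B_{\lambda^+})=\lambda^+$ exists only once one already knows $\lambda^+\in\pcf(A)$, since otherwise $J_{<\lambda^{++}}[A]=J_{<\lambda^+}[A]$ and there is nothing to generate. The non-circular standard arguments are: (i) apply Shelah's lemma that if $\prod A/J$ is $\mu$-directed for a progressive $A$ and some $<_J$-increasing $\mu$-sequence has no upper bound, then the sequence is cofinal in $\prod B/J$ for some $J$-positive $B$, and then argue that the least such $\mu$ for $J=J^{\mathrm{bd}}$ is exactly $\lambda^+$ (this uses exact upper bounds at limit points of suitable cofinality); or (ii) combine your lower bound, which puts \emph{some} regular $\mu\ge\lambda^+$ into $\pcf(A)$, with the no-holes/localization theorem that $\pcf(A)$ contains every regular cardinal in $(\sup A,\mu]$. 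Route (ii) is legitimate but rests on a theorem at least as deep as the lemma itself. Either way, the essential step is asserted rather than proved, so as a self-contained argument this is incomplete precisely at the point the lemma is about.
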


\section{Proof of Theorem \ref{a2}}
In this section we prove our main result by building a model of ZFC in which for some set $B$ of regular cardinals, the sequence
$\langle  \pcf^\alpha(B): \alpha \in \Ord     \rangle$ is strictly increasing.

Thus suppose that $\text{GCH}$ holds and  $\langle \kappa_n: n<\omega   \rangle $
is an increasing sequence of supercompact cardinals. For notational simplicity set
 $\kappa=\kappa_0$. Let also $A=\{\alpha < \kappa: \alpha$ is $< \kappa$-supercompact    $ \}$, and
suppose that $A$ belongs to any normal measure on $\kappa$.
 \\
 {\bf Step 1.} At the first step, we build a forcing extension in which each $\kappa_n$ becomes indestructibly supercompact,
 and the elements of the set $A$ remain partially supercompact and indestructible. To make the cardinals $\kappa_n$ indestructibly
 supercompact, we use the Laver forcing \ref{a3}, and in order to obtain indestructibility for partially supercompact cardinals which are in
 $A$, we use \cite{apter}.
Thus by a suitable preparation, there exists a full support iteration of the form
 \[
{\bbP}^1= \langle \langle  \bbP^1_n: n \leq \omega       \rangle, \langle  \name{\bbQ}^1_n: n<\omega     \rangle\rangle,
\]
where $\bbP^1_0$ is the trivial forcing and:
\begin{enumerate}
\item[(1-1)] $\bbP^1_1 \simeq \bbQ^1_0 \subseteq V_\kappa$ is an iteration  such that in the generic extension by it,
the set
$A'=\{\alpha < \kappa: \alpha$ is $< \kappa$-supercompact and indestructible under $\alpha$-directed closed forcing notions from
$V_\kappa \}$
belongs to any normal measure on $\kappa$,

\item[(1-2)] after forcing with  $\bbP^1_1,$ $\kappa$ becomes supercompact and indestructible and $2^\kappa=\kappa^+$,

\item[(1-3)] for $n<\omega, $  $\Vdash_{\bbP^1_n}$``$\name{\bbQ}^1_{n+1}=\mathbb{L}(\kappa_{n}, \kappa_{n+1})$'', where $\mathbb{L}(\kappa_{n}, \kappa_{n+1})$ is as in Lemma \ref{a3}.
\end{enumerate}

Let $\bold V^1=\bold V[\bold G_{\bbP^1}]=\bold V[\bold G_{\bbP^1_\omega}]$. Thus in $\bold V^1$,
each $\kappa_n, n<\omega$, is supercompact and Laver indestructible and  $2^{\kappa_n}=\kappa_n^+$. Furthermore, the set
$A'$ as defined in (1-1), belongs to any normal measure on $\kappa$.
\\
{\bf Step 2.} Work in $\bold V^1$ and force with the full support product
$\bbP^2=\prod_{n<\omega} \Add(\kappa_n, \kappa_{n+1})$. In the generic extension $\bold V^2=\bold V^1[\bold G_{\bbP^2}]$, the cardinal $\kappa$
still remains supercompact and  $2^{\kappa_n}=\kappa_{n+1}$ for each  $n<\omega$.
Note that
\[
\bbP^1 \ast \name{\bbP}^2 = \bbP^1_1 \ast (\name{\bbP}^1_\omega / \dot{\bold{G}}_{\bbP^1_1}) \ast (\prod_{n<\omega} \name{\Add}(\kappa_n, \kappa_{n+1})),
\]
where $\bbP^1_1 \subseteq V_\kappa$ and the rest of the forcing is $\kappa$-directed closed.
Thus for a measure one set $A''$
 of $\beta < \kappa$ we can assume that:
\begin{enumerate}
\item[(2-1)] $\beta$ is $<\kappa$-supercompact in $\bold V,$

\item[(2-2)]  $\bbP^1_1$ can be written  as
\[
\bbP^1_1 \simeq \bbP^{1}_1(< \beta) \ast \name{\bbP}^1_1(\beta) \ast \name{\bbP}^1_{1}(> \beta),
\]
where
\begin{enumerate}
\item $\bbP^1_1(<\beta)$ is $\beta$-c.c. of size $\beta$,

\item $\bbP^1_1(< \beta)$ forces $\beta$ is $< \kappa$-supercompact and indestructible under $\beta$-directed closed forcing notions from
$V_\kappa$,
\item $\Vdash_{\bbP^1_1(<\beta)}$``$\name{\bbP}^1_1(\beta)=(\name{\bbP}^1_{1,\omega}(\beta) / \dot{\bold{G}}_{\name{\bbP}^1_1(<\beta)}) \ast \prod_{n<\omega}\name{\Add}(\zeta_{\beta, n}, \zeta_{\beta, n+1})$'',
where $\langle \zeta_{\beta, n}: n<\omega     \rangle$ is an increasing sequence of elements of $A$ with $\zeta_{\beta, 0}=\beta$, and such that working in the generic extension by $\bbP^1_1(<\beta)$, the forcing notion
$$\bbP^1_{1,\omega}(\beta)/\bold{G}_{{\bbP}^1_1(<\beta)}= \langle \langle  {\bbP}^1_{1,n}(\beta): 0< n \leq \omega       \rangle, \langle  \name{\bbQ}^1_{1,n}(\beta): 0< n<\omega     \rangle\rangle$$
 is the full support iteration
of the forcing notions $\name{\bbQ}^1_{1, n+1}(\beta)=\name{\mathbb{L}}(\zeta_{\beta, n}, \zeta_{\beta, n+1})$''.

\item $\Vdash_{\bbP^1_1(<\beta) \ast \name{\bbP}^1_{1,n}(\beta) / \dot{\bold{G}}_{\name{\bbP}^1_1(\beta)}}$``the measurability of $\zeta_{\beta, n}$ is indestructible under $\zeta_{\beta, n}$-directed closed forcing notions from $V_\kappa$'',


\item $\Vdash_{\bbP^{1}_1(< \beta) \ast \name{\bbP}^1_1(\beta)}$``$\name{\bbP}^1_{1}(> \beta)$ is
$\mu$-directed closed, where $\mu \in A$ is above $\sup_{n<\omega}\zeta_{\beta, n}$.
\end{enumerate}
\end{enumerate}
Note that this is true for $\kappa$ and the sequence $\langle  \kappa_n: n<\omega   \rangle$,
and so by supercompactness of $\kappa$, those properties reflect down to a large set.
\\
{\bf Step 3.} Working in the model $\bold V^2$, by \cite{foreman},
there exists a forcing notion $\bbR$ such that the following properties are hold:
\begin{enumerate}
\item[(3-1)] $(\bbR, \leq, \leq^*)$ is a Prikry type forcing notion,

\item[(3-2)] $\bbR$ is $\kappa^+$-c.c.,

\item[(3-3)] $\kappa$ remains inaccessible in $\bold V^2[\bold G_{\bbR}],$

\item[(3-4)] forcing with $\bbR$ adds a club $C$ of $\kappa$ consisting of elements of $A''$,\footnote{This can always be done, by forcing below a condition,  which forces each element of the Radin club to be in $A''$.}

\item[(3-5)] suppose $\alpha < \beta$ are two successive points in $C$. Then $\beth_5(\alpha)=2^{\beth_4(\alpha)}=\beta,$

\item[(3-6)] (factorization property)\footnote{Indeed one can say more. Given a condition $p \in \bbR$ such that
$\alpha$ and $\beta$ appear as successive points in $p$, one can factor $\bbR/p$ as
$\big(\bbR^{< \alpha}/ p(<\alpha)\big) \times \big(\prod_{i<4} \Add(\beth_i(\alpha), \beth_{i+1}(\alpha)) \times \Add(\beth_4(\alpha), \beta) \big) \times \big(\bbR^{>\beta} / p(>\beta) \big)$. Note that this representation does not depend on the club $C$, but once we have $C$,
given $\alpha \in C$, $\beta =\min(C \setminus (\alpha+1))$ and $p \in \bold G$, we can always find an extension $q$ of $p$ such that $q \in \bold G_{\bbR}$ and $\alpha$
and $\beta$ appear in $q$.} suppose $\alpha \in C$ and $\beta =\min(C \setminus (\alpha+1))$. Then
\[
\bbR \simeq \bbR^{< \alpha} \times \prod_{i<4} \Add(\beth_i(\alpha), \beth_{i+1}(\alpha)) \times \Add(\beth_4(\alpha), \beta) \times \bbR^{>\beta},
\]
where
\begin{enumerate}
\item $\bbR^{< \alpha}$ is $\alpha^+$-c.c.,

\item $(\bbR^{> \beta}, \leq^*)$ is $\beta^+$-closed.
\end{enumerate}

\end{enumerate}
Let $\bold V^3=\bold V^2[\bold G_{\bbR}]$.
\\
{\bf Step 4.}
We will need the following  lemma.
\begin{lemma} (in $\bold V^3$)
\label{a7} Suppose $\beta$ is a  limit point of $C$. Then
\[
\pcf( \beta \cap \Reg) = \beth_1(\beta)^+ \cap \Reg.
\]
\end{lemma}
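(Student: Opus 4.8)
The plan is to establish the two inclusions separately; the inclusion $\pcf(\beta\cap\Reg)\subseteq\beth_1(\beta)^+\cap\Reg$ is a soft cardinality count, while the reverse inclusion carries all the content. For the easy direction, observe that $\beta\cap\Reg$ has at most $\beta$ elements, each below $\beta$, so $|\prod(\beta\cap\Reg)|\le\beta^{\beta}=2^{\beta}=\beth_1(\beta)$. Hence for every ultrafilter $D$ on $\beta\cap\Reg$ one has $\cf(\prod(\beta\cap\Reg),<_D)\le\beth_1(\beta)<\beth_1(\beta)^+$, and since every member of a $\pcf$ set is regular, $\pcf(\beta\cap\Reg)\subseteq\beth_1(\beta)^+\cap\Reg$.

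For the reverse inclusion I would split the regular cardinals below $\beth_1(\beta)^+$ into three ranges. Every regular $\mu<\beta$ already lies in $\beta\cap\Reg$ and is realised by the principal ultrafilter concentrating on $\{\mu\}$, so $\mu\in\pcf(\beta\cap\Reg)$. Next, since $\beta$ is a proper limit point of $C$ its cofinality equals $\cf(\otp(C\cap\beta))<\beta$, so $\beta$ is singular and Lemma \ref{a5} places $\beta^{+}$ in $\pcf(\beta\cap\Reg)$ (the degenerate case in which $\beta$ is a regular, hence measurable, limit point is immediate from Lemma \ref{a4}(1)). This leaves the genuinely hard range, the regular $\mu$ with $\beta^{+}\le\mu\le\beth_1(\beta)=2^{\beta}$.

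To attack this range I would use that $\beta$ is a limit of the points of $C$, which by (3-4) lie in $A''$ and are measurable in $\bold V^3$ by the indestructibility arranged in Steps 1--3. Fix an increasing cofinal sequence $\langle\gamma_i:i<\cf(\beta)\rangle$ of such points, discarding an initial segment so that $\cf(\beta)<\gamma_0$ and $\{\gamma_i:i<\cf(\beta)\}$ is progressive, and partition $\beta\cap\Reg$ into the intervals $I_i=[\gamma_{i-1},\gamma_i)\cap\Reg$. By Lemma \ref{a4}(1), localised to the measurable $\gamma_i$, for each $i$ and each regular $\rho_i\le 2^{\gamma_i}$ there is an ultrafilter $D_i$ on $I_i$ with $\cf(\prod I_i,<_{D_i})=\rho_i$. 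Since $\sup_i 2^{\gamma_i}=\beta$ and $\prod_i\gamma_i=\beta^{\cf(\beta)}$, which equals $2^{\beta}$ by the continuum pattern of (3-5), the idea is to lift these coordinatewise cofinalities past $\beta$ by forming Rudin--Keisler sums $\sum_E D_i$ along an ultrafilter $E$ on $\cf(\beta)$: these are ultrafilters on $\beta\cap\Reg$ with $\cf(\prod(\beta\cap\Reg),<_{\sum_E D_i})=\cf(\prod_i\rho_i,<_E)$. Letting $\langle\rho_i\rangle$ and $E$ vary, and invoking the transitivity $\pcf(\pcf(X))=\pcf(X)$ on the progressive pieces, then feeds the resulting cofinalities back into $\pcf(\beta\cap\Reg)$.

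The main obstacle is precisely a \emph{density} statement inside this last range: one must show that as $\langle\rho_i\rangle$ ranges over the regular cardinals below $\beta$ permitted by Lemma \ref{a4}(1) and $E$ ranges over ultrafilters on $\cf(\beta)$, the values $\cf(\prod_i\rho_i,<_E)$ exhaust \emph{every} regular $\mu\in(\beta^{+},2^{\beta}]$, not merely the top value $\max\pcf=2^{\beta}$. This is where the special arithmetic built in Steps 1--3 is indispensable: the $\beth$-pattern of (3-5) together with the factorisation (3-6) forces the continuum below $\beta$ to climb through measurable values densely, giving enough room at each coordinate to interpolate an arbitrary target $\mu$. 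I would organise this by factoring $\bbR\simeq\bbR^{<\beta}\times\bbR^{>\beta}$ as in (3-6), noting that $\bbR^{>\beta}$ is $\beta^{+}$-closed and hence adds no new members of $\prod(\beta\cap\Reg)$, so the whole computation may be carried out in $\bold V^{2}[\bold G_{\bbR^{<\beta}}]$ where $\bbR^{<\beta}$ is $\beta^{+}$-c.c.; Lemma \ref{a4}(2) then lets me transfer the realising ultrafilters obtained from the measurable $\gamma_i$ into this extension without losing any cofinalities.
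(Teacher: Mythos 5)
Your treatment of the easy inclusion, of the regular cardinals below $\beta$, and of $\beta^{+}$ (via Lemma \ref{a5}, after noting that (3-5) makes $\beta$ a singular strong limit) is fine, and your reduction of the remaining range is the right skeleton: partition $\beta\cap\Reg$ along a cofinal sequence $\langle\gamma_i:i<\cf(\beta)\rangle$ of points of $C$, which are measurable in $\bold V^3$, realise a prescribed regular $\rho_i\le 2^{\gamma_i}$ on each block by Lemma \ref{a4}(1), and pass to $\cf(\prod_i\rho_i,<_E)$ via a Rudin--Keisler sum. This is precisely the frame of the argument of \cite{jech} that the paper invokes (the paper itself gives nothing beyond the citation). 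The problem is that you stop at what you yourself call ``the main obstacle'': showing that the values $\cf(\prod_i\rho_i,<_E)$ exhaust \emph{every} regular $\mu\in(\beta^{+},2^{\beta}]$ rather than just the extreme ones. That density statement is the entire content of the lemma --- everything preceding it is routine --- and your paragraph about it is a heuristic, not an argument. Moreover the heuristic points at the wrong mechanism: the interpolation of an arbitrary regular $\mu\le 2^{\beta}$ does not come from the $\beth$-pattern of (3-5) or the factorisation (3-6) ``giving enough room at each coordinate''; it comes from the $\gamma_i$-completeness of the measures on the measurable $\gamma_i$, which is what allows one to build, for each target $\mu$, a $<_E$-increasing sequence of length $\mu$ in $\prod_i\gamma_i$ that is cofinal in the reduced product for a suitably chosen ultrafilter $E$ (Jech's construction of such exact sequences). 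Without carrying out that construction, or at least quoting the theorem of \cite{jech} that for an increasing sequence of measurables with supremum $\beta$ every regular $\mu\in(\beta,2^{\beta}]$ arises as such a reduced-product cofinality, the inclusion $\beth_1(\beta)^{+}\cap\Reg\subseteq\pcf(\beta\cap\Reg)$ above $\beta^{+}$ is not established.

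Two smaller points. First, Jech's theorem is stated for $\omega$-sequences of measurables; the limit points of $C$ have arbitrary cofinality below $\kappa$, so the version for longer sequences must at least be checked to go through. Second, your parenthetical ``regular, hence measurable'' for a regular limit point of $C$ is not automatic: a regular limit point of a Radin club need not remain measurable in the extension, so that degenerate case needs either a justification or the observation that the $\beta$ to which the lemma is applied in Step 5 are all singular.
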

The proof is essentially the same as in \cite{jech} ( see also \cite{kenta}).
\\
{\bf Step 5.} In this step we complete the proof of Theorem \ref{a2}.
Let $\langle  \theta_\xi: \xi < \kappa    \rangle$
be an increasing enumeration of $C$, where $C$ is the club added by Radin forcing in step 3, and set $\theta_\kappa=\kappa.$
 We show that the model
 $\bold M= \bold V^3_{\kappa}$ (=the model $\bold V_\kappa$ as computed in ${\bold V^3}$)
is as required.
To see this, let  $B=\theta_0 \cap \Reg.$ It suffices to show that
\[
\langle  \pcf^\alpha(B): \alpha < \kappa   \rangle
\]
is strictly increasing.

Suppose $\alpha < \kappa$  and write it as $\alpha=\delta+n$, where $\delta < \kappa$ is a limit ordinal and $n<\omega$.
We show by induction that:
\begin{itemize}
\item if $n >0$, then
\[
\pcf^\alpha(B)= \beth_{n}(\theta_{\delta})^+ \cap \Reg.
\]
\item
if $\alpha$ is a limit ordinal (including $\alpha=0$),
then
\[
\pcf^\alpha(B)= \theta_\alpha \cap \Reg.
\]
\end{itemize}
The claim is true for $\alpha=0$,  and if $\alpha$ is a limit ordinal, then by the induction hypothesis,
\[
\pcf^\alpha(B)=  \bigcup_{\xi < \alpha, \lim(\xi)}\bigcup_{n<\omega} \pcf^{\xi+n}(B) = \bigcup_{\xi < \alpha, \lim(\xi)}\bigcup_{n<\omega}
\beth_{n}(\theta_{\xi})^+ \cap \Reg=\theta_\alpha \cap \Reg.
\]
Now suppose the claim for $\alpha$. If $\alpha$ is a limit ordinal, then  by Lemma \ref{a7},
\[
\pcf^{\alpha+1}(B) =\pcf(\theta_\alpha \cap \Reg) = \beth_{1}(\theta_{\alpha})^+ \cap \Reg.
\]
Otherwise $\alpha=\delta+n$ for some $n>0$
and using Lemma \ref{a4}, and the factorization properties (2-2)  and (3-6) we can easily conclude that
\[
\pcf^{\alpha+1}(B) =\pcf(\beth_{n}(\theta_{\delta})^+ \cap \Reg) = \beth_{n+1}(\theta_{\delta})^+ \cap \Reg.
\]
Indeed, let $n=5m+k$, where $k<5$.  Then $\beth_n(\theta_\delta)=\beth_k(\theta_{\delta+m})$. Let $\beta=\theta_{\delta+m}$, so
\begin{itemize}
\item $\beta \in C,$
 \item for $i \leq 4$, $\beth_{i}(\theta_{\delta+m})=\zeta_{\beta, i}$, \footnote{See (2-2)(c).}
 \item $\beth_{5}(\theta_{\delta+m})=\theta_{\delta+m+1}$,
\end{itemize}
 To continue the proof, we consider two cases depending on whether $k<4$ or $k=4$.
 Let us  suppose that $k<4,$ the case $k=4$ can be treated in a similar way.
\begin{lemma}
\label{lem1} The forcing notion $\bbP^1 \ast \name{\bbP}^2 \ast \name{\bbR}$ can be factored as
 \[
 \bbP^1 \ast \name{\bbP}^2 \ast \name{\bbR} \simeq \mathbb{L} \ast  \name{\mathbb{K}} \ast  \name{\mathbb{J}},
 \]
 where
 \begin{enumerate}
 \item in the extension by $\mathbb{L}$, the cardinal $\zeta_{\beta, k}$ is measurable and its measurability is indestructible under
 $\zeta_{\beta, k}$-closed forcing notions,

\item 
  ${\mathbb{K}} \simeq  \big(\prod_{j \leq k}{\Add}(\zeta_{\beta, j}, \zeta_{\beta, j+1}) \big) \ast \bigg(\name{\bbR}^{< \theta_{\delta+m}} \times \prod_{i \leq k} \Add(\zeta_{\beta, i}, \zeta_{\beta, i+1})\bigg),$

\item  $\name{\mathbb{J}}$ is forced to add no new subsets to $\zeta_{\beta, k}^+.$
  \end{enumerate}
 \end{lemma}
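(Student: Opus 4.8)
The plan is to read the three forcings off the factorizations (2-2) and (3-6) together with the decomposition $\bbP^1\ast\name{\bbP}^2=\bbP^1_1\ast(\name{\bbP}^1_\omega/\dot{\bold G}_{\bbP^1_1})\ast\prod_{n<\omega}\name{\Add}(\kappa_n,\kappa_{n+1})$, and then to rearrange the resulting factors according to their closure relative to $\zeta_{\beta,k}$. Recall that $\beta=\theta_{\delta+m}\in C$ and $\zeta_{\beta,i}=\beth_i(\beta)$ for $i\leq 4$. First I set $\mathbb{L}=\bbP^1_1(<\beta)\ast\name{\bbP}^1_{1,k}(\beta)$, the preparation below $\beta$ followed by the first $k$ steps of the Laver iteration in (2-2)(c). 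When $k=0$ this is just $\bbP^1_1(<\beta)$, and (2-2)(b) gives that $\zeta_{\beta,0}=\beta$ is $<\kappa$-supercompact, hence measurable, with its measurability indestructible; when $k\geq 1$, applying (2-2)(d) with $n=k$ shows that in $\bold V[\bold G_{\mathbb{L}}]$ the cardinal $\zeta_{\beta,k}$ is measurable and its measurability is indestructible under $\zeta_{\beta,k}$-directed closed forcing notions from $V_\kappa$. This is clause (1).

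Substituting (2-2) into the decomposition above, the forcing $\bbP^1\ast\name{\bbP}^2\ast\name{\bbR}$ becomes
\[
\mathbb{L}\ast\big(\name{\bbP}^1_{1,\omega}(\beta)/\bbP^1_{1,k}(\beta)\big)\ast\Big(\prod_{n<\omega}\name{\Add}(\zeta_{\beta,n},\zeta_{\beta,n+1})\Big)\ast\name{\bbP}^1_1(>\beta)\ast\big(\name{\bbP}^1_\omega/\dot{\bold G}_{\bbP^1_1}\big)\ast\Big(\prod_{n<\omega}\name{\Add}(\kappa_n,\kappa_{n+1})\Big)\ast\name{\bbR},
\]
and, by (3-6) applied to the successive points $\beta$ and $\theta_{\delta+m+1}$ of $C$,
\[
\name{\bbR}\simeq\name{\bbR}^{<\beta}\times\Big(\prod_{i<4}\name{\Add}(\zeta_{\beta,i},\zeta_{\beta,i+1})\Big)\times\name{\Add}(\zeta_{\beta,4},\theta_{\delta+m+1})\times\name{\bbR}^{>\theta_{\delta+m+1}}.
\]
I now classify the factors relative to $\zeta_{\beta,k}$. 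The factors $\prod_{j\leq k}\name{\Add}(\zeta_{\beta,j},\zeta_{\beta,j+1})$ (split off from the product at $\beta$), $\name{\bbR}^{<\beta}$, and $\prod_{i\leq k}\name{\Add}(\zeta_{\beta,i},\zeta_{\beta,i+1})$ (split off from the Radin Cohen block) are exactly the ones adding new subsets to $\zeta_{\beta,k}^+$; these will form $\mathbb{K}$. Every remaining factor is $\zeta_{\beta,k}^+$-closed: the Laver tail and the products $\prod_{j>k}\name{\Add}$, $\prod_{k<i<4}\name{\Add}$ and $\name{\Add}(\zeta_{\beta,4},\theta_{\delta+m+1})$ are $\zeta_{\beta,k}^+$-directed closed by Lemma \ref{a3} and the closure of Cohen forcing; $\name{\bbP}^1_1(>\beta)$ is $\mu$-directed closed with $\mu>\sup_{n}\zeta_{\beta,n}$ by (2-2)(e); $\name{\bbP}^1_\omega/\dot{\bold G}_{\bbP^1_1}$ and $\prod_{n}\name{\Add}(\kappa_n,\kappa_{n+1})$ are $\kappa$-directed closed; and $\name{\bbR}^{>\theta_{\delta+m+1}}$ is Prikry type with $\theta_{\delta+m+1}^+$-closed direct extension order by (3-6)(b), so it adds no new subset of $\zeta_{\beta,k}^+$.

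To collect the factors I use the commutation principle that a $\zeta_{\beta,k}^+$-closed forcing adds no new member of $H(\zeta_{\beta,k}^+)$, hence leaves unchanged, and therefore commutes with, any forcing all of whose conditions lie in $H(\zeta_{\beta,k}^+)$. All the small factors have this form: a condition of $\name{\Add}(\zeta_{\beta,j},\cdot)$ with $j\leq k$ has size $<\zeta_{\beta,k}$, and, because $\beta=\theta_{\delta+m}\in C\subseteq A''$ is inaccessible, every condition of $\name{\bbR}^{<\beta}$ has rank $<\beta\leq\zeta_{\beta,k}$. Commuting each closed factor past the small factors, the small factors coalesce, in their original relative order, into
\[
\mathbb{K}\simeq\Big(\prod_{j\leq k}\Add(\zeta_{\beta,j},\zeta_{\beta,j+1})\Big)\ast\Big(\name{\bbR}^{<\beta}\times\prod_{i\leq k}\Add(\zeta_{\beta,i},\zeta_{\beta,i+1})\Big),
\]
which is clause (2), while the closed factors coalesce into a forcing $\mathbb{J}$ which is an iteration each of whose steps adds no new subset of $\zeta_{\beta,k}^+$ over the preceding model, and hence adds no new subset to $\zeta_{\beta,k}^+$ at all, giving clause (3). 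The case $k=4$ is identical, with the block $\prod_{i\leq k}\Add(\zeta_{\beta,i},\zeta_{\beta,i+1})$ replaced by $\prod_{i<4}\Add(\zeta_{\beta,i},\zeta_{\beta,i+1})\times\Add(\zeta_{\beta,4},\theta_{\delta+m+1})$.

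The main obstacle is the legitimacy of the commutations involving $\name{\bbR}^{<\beta}$ and the Radin Cohen block, since these are names computed only in $\bold V^2=\bold V^1[\bold G_{\bbP^2}]$, that is, after all the closed factors have been forced. One must check that the closed tail does not alter the computation of $\name{\bbR}^{<\beta}$; here the observation in (3-6) that the factorization of $\bbR$ does not depend on the club $C$ is used, together with the fact that every closed factor is at least $\beta^+$-closed while the conditions of $\name{\bbR}^{<\beta}$ have rank $<\beta$, so that $H(\beta^+)$, and with it the poset $\name{\bbR}^{<\beta}$ and each Cohen factor at a level $\leq k$, is computed identically before and after the closed tail. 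Once this coherence is established the commutations go through and the three clauses follow.
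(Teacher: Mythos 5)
Your proposal is correct and follows essentially the same route as the paper: you take $\mathbb{L}=\bbP^1_1(<\beta)\ast\name{\bbP}^1_{1,k}(\beta)$, split the factorizations (2-2) and (3-6) at level $k$, and justify regrouping the ``small'' Cohen and $\name{\bbR}^{<\theta_{\delta+m}}$ factors into $\mathbb{K}$ by observing that the interleaved $\zeta_{\beta,k}^+$-closed factors do not change how those small posets are computed, which is exactly the paper's argument. The remaining closed factors form $\mathbb{J}$ in both treatments, so no further comment is needed.
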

 \begin{proof}
 Recall from  (2-2) that $\bbP^1_1 \simeq \bbP^{1}_1(< \beta) \ast \name{\bbP}^1_1(\beta) \ast \name{\bbP}^1_{1}(> \beta)$,
 where
 \begin{center}
 $\Vdash_{\bbP^{1}_1(< \beta) \ast \name{\bbP}^1_1(\beta)}$``$\name{\bbP}^1_{1}(> \beta)$ is $\zeta_{\beta, k}^+$-directed closed'',\footnote{Indeed, it is forced to have more closure properties, but  $\zeta_{\beta, k}^+$-directed closedness is sufficient for us.}
 \end{center}
 and the forcing notion $\bbP^1_1(\beta)$ is forced to be of the form
$$
 \bbP^1_1(\beta) = (\name{\bbP}^1_{1,\omega}(\beta) / \dot{\bold{G}}_{\name{\bbP}^1_1(<\beta)}) \ast \prod_{n<\omega}\name{\Add}(\zeta_{\beta, n}, \zeta_{\beta, n+1})$$
 where
$\bbP^1_{1,\omega}(\beta)/\bold{G}_{{\bbP}^1_1(<\beta)}= \langle \langle  {\bbP}^1_{1,n}(\beta): 0< n \leq \omega       \rangle, \langle  \name{\bbQ}^1_{1,n}(\beta): 0< n<\omega     \rangle\rangle$
 is the full support iteration
of the forcing notions $\name{\bbQ}^1_{1, n+1}(\beta)=\name{\mathbb{L}}(\zeta_{\beta, n}, \zeta_{\beta, n+1})$''.
In particular, we can write $\bbP^1_1(\beta)$
as
 $$\bbP^1_1(\beta)=\bbP^1_{1, k}(\beta) \ast \big(\name{\bbP}^1_{1, \omega} / \dot{\bold{G}}_{\bbP^1_{1, k}(\beta)}\big)  \ast \prod_{n<\omega}\name{\Add}(\zeta_{\beta, n}, \zeta_{\beta, n+1}),$$
 where
\begin{enumerate}
\item[(4-1)] $\bbP^1_{1, k}(\beta)$ makes the measurability of $\zeta_{\beta, k}$ indestructible under $\zeta_{\beta, k}$-forcing notions from
$\bold V_\kappa$,

\item[(4-2)] $\Vdash_{\bbP^1_{1, k}(\beta)}$``$\name{\bbP}^1_{1, \omega} / \dot{\bold{G}}_{\bbP^1_{1, k}(\beta)}$ is  $\zeta_{\beta, k}^+$-directed closed''.
\end{enumerate}
Thus,
\[
\bbP^1_1 \simeq \bbP^1_1(<\beta) \ast \bbP^1_{1, k}(\beta) \ast  \bigg(\prod_{j \leq k}\name{\Add}(\zeta_{\beta, j}, \zeta_{\beta, j+1})
\times \name{\bbP}^1_{1, \omega} / \dot{\bold{G}}_{\bbP^1_{1, k}(\beta)} \bigg) \ast \bigg(\prod_{j > k}\name{\Add}(\zeta_{\beta, j}, \zeta_{\beta, j+1})  \bigg) \ast \name{\bbP}^1_1(> \beta),
\]
This is true as $\prod_{j \leq k}\name{\Add}(\zeta_{\beta, j}, \zeta_{\beta, j+1})  $ is computed the same, in the extension by the forcing notions $\bbP^1_{1, k}(\beta)$ and $\bbP^1_{1, k}(\beta) \ast \big(\name{\bbP}^1_{1, \omega} / \dot{\bold{G}}_{\bbP^1_{1, k}(\beta)}\big)$, and
\[
\bbP^1_{1, \omega} / \dot{\bold{G}}_{\bbP^1_{1, k}(\beta)} \ast \prod_{j \leq k}\name{\Add}(\zeta_{\beta, j}, \zeta_{\beta, j+1}) \simeq \bbP^1_{1, \omega} / \dot{\bold{G}}_{\bbP^1_{1, k}(\beta)} \times \prod_{j \leq k}\name{\Add}(\zeta_{\beta, j}, \zeta_{\beta, j+1}).
\]

On the other hand, the forcing notion $\bbP^2$ is $\kappa$-directed closed, and by (3-6),  the forcing
 notion $\bbR$ can be written as
 $$\bbR=\bbR^{< \theta_{\delta+m}} \times \big(\prod_{i<4} \Add(\zeta_{\beta, i}, \zeta_{\beta, i+1}) \times
\Add(\zeta_{\beta, 4}, \theta_{\delta+m+1}) \big)  \times  \bbR^{> \theta_{\delta+m+1}},$$
 where
 \begin{enumerate}
\item[(5-1)] $\bbR^{< \theta_{\delta+m}}$ is $\beta^+$-c.c.,

 \item[(5-2)] forcing with $\bbR^{> \theta_{\delta+m+1}}$ adds no new  subsets of $ \theta_{\delta+m+1}$.
 \end{enumerate}
We have
  $$\bbR=\bbR^{< \theta_{\delta+m}} \times \prod_{i\leq k} \Add(\zeta_{\beta, i}, \zeta_{\beta, i+1}) \times
\prod_{k<i<4} \Add(\zeta_{\beta, i}, \zeta_{\beta, i+1}) \times \Add(\zeta_{\beta, 4}, \theta_{\delta+m+1})   \times  \bbR^{> \theta_{\delta+m+1}}.$$

It follows that $\bbP^1 \ast \name{\bbP}^2 \ast \name{\bbR}$ can be written as
$$\begin{array}{ll}
\bbP^1 \ast \name{\bbP}^2 \ast \name{\bbR}&\simeq \bbP^1_1(<\beta) \ast \bbP^1_{1, k}(\beta) \ast  \bigg(\prod_{j \leq k}\name{\Add}(\zeta_{\beta, j}, \zeta_{\beta, j+1})
\times \name{\bbP}^1_{1, \omega} / \dot{\bold{G}}_{\bbP^1_{1, k}(\beta)} \bigg)\\
&\quad \ast \big(\prod_{j > k}\name{\Add}(\zeta_{\beta, j}, \zeta_{\beta, j+1})  \big) \ast \name{\bbP}^1_1(> \beta) \ast {\bbP}^1_{\omega} / {\dot G}_{\bbP^{1}_{1}}\\
&\quad\ast \prod_{n<\omega} \name{\Add}(\kappa_n, \kappa_{n+1})\\
&\quad\ast \bigg(\bbR^{< \theta_{\delta+m}} \times \big(\prod_{i<4} \Add(\zeta_{\beta, i}, \zeta_{\beta, i+1}) \times
\Add(\zeta_{\beta, 4}, \theta_{\delta+m+1}) \big)  \times  \bbR^{> \theta_{\delta+m+1}}\bigg)\\
&\simeq \bbP^1_1(<\beta) \ast \bbP^1_{1, k}(\beta) \ast  \big(\prod_{j \leq k}\name{\Add}(\zeta_{\beta, j}, \zeta_{\beta, j+1}) \big)\\
&\quad \ast\name{\bbP}^1_{1, \omega} / \dot{\bold{G}}_{\bbP^1_{1, k}(\beta)}  \ast \big(\prod_{j > k}\name{\Add}(\zeta_{\beta, j}, \zeta_{\beta, j+1})  \big) \ast \name{\bbP}^1_1(> \beta) \ast {\bbP}^1_{\omega} / {\dot G}_{\bbP^{1}_{1}}\\
&\quad\ast \prod_{n<\omega} \name{\Add}(\kappa_n, \kappa_{n+1})\\
&\quad\ast \bigg( \bbR^{< \theta_{\delta+m}} \times \prod_{i \leq k} \Add(\zeta_{\beta, i}, \zeta_{\beta, i+1})\\
&\quad \times
\prod_{k<i<4} \Add(\zeta_{\beta, i}, \zeta_{\beta, i+1}) \times \Add(\zeta_{\beta, 4}, \theta_{\delta+m+1})   \times  \bbR^{> \theta_{\delta+m+1}}\bigg)\\
\end{array}$$
Now note that the forcing notion $\bbR^{< \theta_{\delta+m}} \times \prod_{i \leq k} \Add(\zeta_{\beta, i}, \zeta_{\beta, i+1})$
is defined the same in the forcing extensions by $\bbP^1 \ast \bbP^2$
and $\bbP^1_1(<\beta) \ast \bbP^1_{1, k}(\beta) \ast  \big(\prod_{j \leq k}\name{\Add}(\zeta_{\beta, j}, \zeta_{\beta, j+1}) \big)$,
as the  forcing notion
$${\bbP}^1_{1, \omega} / \dot{\bold{G}}_{\bbP^1_{1, k}(\beta)}  \ast \big(\prod_{j > k}\name{\Add}(\zeta_{\beta, j}, \zeta_{\beta, j+1})  \big) \ast \name{\bbP}^1_1(> \beta) \ast {\bbP}^1_{\omega} / {\dot G}_{\bbP^{1}_{1}} \ast \prod_{n<\omega} \name{\Add}(\kappa_n, \kappa_{n+1})$$
does not add new subsets to  $\bold V_{\zeta_{\beta, k}}$. Putting all things together,
we can conclude that
$\bbP^1 \ast \name{\bbP}^2 \ast \name{\bbR}$ can be written as
\[
\bbP^1 \ast \name{\bbP}^2 \ast \name{\bbR} \simeq \mathbb{L} \ast \name{\mathbb{K}} \ast \name{\mathbb{J}}
\]
where
\begin{itemize}
 \item[(6-1)] $\mathbb{L} \simeq \bbP^1_1(<\beta) \ast \bbP^1_{1, k}(\beta)$,

\item[(6-2)] $\mathbb{K}$ is forced by $1_{\mathbb{L}}$ to be of the form
   $$\name{\mathbb{K}} \simeq  \big(\prod_{j \leq k}\name{\Add}(\zeta_{\beta, j}, \zeta_{\beta, j+1}) \big) \ast \bigg(\name{\bbR}^{< \theta_{\delta+m}} \times \prod_{i \leq k} \name{\Add}(\zeta_{\beta, i}, \zeta_{\beta, i+1})\bigg),$$

 \item[(6-3)] $1_{\mathbb{L} \ast \name{\mathbb{K}}}$ forces $\name{\mathbb{J}} \simeq \name{\mathbb{J}}_1 \ast \name{\mathbb{J}}_2 \ast \name{\mathbb{J}}_3$,
 where
 \begin{enumerate}
\item[(a)] $\name{\mathbb{J}}_1  \simeq \ast\name{\bbP}^1_{1, \omega} / \dot{\bold{G}}_{\bbP^1_{1, k}(\beta)}  \ast \big(\prod_{j > k}\name{\Add}(\zeta_{\beta, j}, \zeta_{\beta, j+1})  \big) \ast \name{\bbP}^1_1(> \beta) \ast {\bbP}^1_{\omega} / {\dot G}_{\bbP^{1}_{1}}$,
 \item[(b)] $\name{\mathbb{J}}_2  \simeq \prod_{n<\omega} \name{\Add}(\kappa_n, \kappa_{n+1})$,

\item[(c)] $\name{\mathbb{J}}_3  \simeq \prod_{k<i<4} \name{\Add}(\zeta_{\beta, i}, \zeta_{\beta, i+1}) \times \name{\Add}(\zeta_{\beta, 4}, \theta_{\delta+m+1})   \times  \name{\bbR}^{> \theta_{\delta+m+1}}.$
 \end{enumerate}
  \end{itemize}
 By (4-1), the cardinal $\zeta_{\beta, k}$ remains measurable after forcing with $\mathbb{L}$, and  its measurability is indestructible under
 $\zeta_{\beta, k}$-closed forcing notions from $\bold V_\kappa$.
 It is also clear that $\mathbb{J}$ is forced to add no new subsets to $\zeta^+_{\beta, k}$. The lemma follows.
  \end{proof}

Let us first prove the desired statement in the extension by $\mathbb{L} \ast \name{\mathbb{K}}$.
We can write
\[
\mathbb{L} \ast \name{\mathbb{K}} \simeq \mathbb{L} \ast \name{\Add}(\zeta_{\beta, k}, \zeta_{\beta, k+1})
\ast \big(\prod_{j < k}\name{\Add}(\zeta_{\beta, j}, \zeta_{\beta, j+1})\big)  \ast \bbR^{< \theta_{\delta+m}} \footnote{Note that the Cohen forcing notions on both iterands of $\mathbb{K}$ are defined in the same universe, so the equivalence follows from the fact that
${\Add}(\zeta_{\beta, j}, \zeta_{\beta, j+1}) \times {\Add}(\zeta_{\beta, j}, \zeta_{\beta, j+1}) \simeq {\Add}(\zeta_{\beta, j}, \zeta_{\beta, j+1}).$ }
\]
Now the cardinal $\zeta_{\beta, k}$ remains measurable in the extension by $\mathbb{L} \ast \name{\Add}(\zeta_{\beta, k}, \zeta_{\beta, k+1})$,
hence by Lemma \ref{a4}(1), in the  model $\bold V[\bold{G}_{\mathbb{L} \ast \name{\Add}(\zeta_{\beta, k}, \zeta_{\beta, k+1})}]$,
\[
(*)_{\delta, n} \quad\quad\pcf(\beth_{n}(\theta_{\delta})^+ \cap \Reg)=\pcf(\zeta_{\beta, k}^+ \cap \Reg)=\zeta_{\beta, k+1}^+  \cap \Reg = \beth_{n+1}(\theta_{\delta})^+ \cap \Reg.
\]
As   the forcing notion $\prod_{j < k}\name{\Add}(\zeta_{\beta, j}, \zeta_{\beta, j+1})$ is $\zeta^+_{\beta, k-1}$-c.c.,\footnote{When $k=0$, the forcing becomes the trivial forcing, so the conclusion is still clear, and we can for example take $\zeta^+_{\beta, -1}=\aleph_1.$} by Lemma \ref{a4}(2), the above equalities continue to hold in
the extension $\bold V[\bold{G}_{\mathbb{L} \ast \big(\prod_{j \leq k}\name{\Add}(\zeta_{\beta, j}, \zeta_{\beta, j+1})\big)}]$.

Set $I_{\theta, n}=(\beta^+, \beth_{n}(\theta_{\delta})^+)$.  By (5-1), the forcing notion $\bbR^{< \theta_{\delta+m}}$ satisfies the $\beta^+=\zeta_{\beta, 0}^+$-c.c.,
hence by another application of
Lemma \ref{a4}(2),
$$\begin{array}{ll}
\pcf^{\bold V[\bold{G}_{\mathbb{L} \ast \name{\mathbb{K}}}]}(\beth_{n}(\theta_{\delta})^+ \cap \Reg) &\supseteq \pcf^{\bold V[\bold{G}_{\mathbb{L} \ast \name{\mathbb{K}}}]}(I_{\theta, n} \cap \Reg)\\
& \supseteq \pcf^{\bold V[\bold{G}_{\mathbb{L} \ast (\prod_{j \leq k}\name{\Add}(\zeta_{\beta, j}, \zeta_{\beta, j+1}))}]}(I_{\theta, n} \cap \Reg)\\ &\supseteq (\beta^+, \beth_{n+1}(\theta_{\delta})^+) \cap \Reg.
\end{array}$$
From this, we can conclude that the equality $(*)_{\delta, n}$ continues to hold in
in
the extension $\bold V[\bold{G}_{\mathbb{L} \ast \name{\mathbb{K}}}]$.
Finally as $\mathbb{J}$ has enough closure properties, the above equalities still continue to hold in
$\bold V^3.$
We are done.
\begin{remark}
The cardinal $\theta_0$ can be taken to be $(\theta_0+2)$-strong, so given any regular cardinal $\eta < \theta_0$,
if we force with Magidor forcing \cite{magidor} to change the cofinality of
$\theta_0$ into $\eta$,
then the above results
still hold
in the extension and in the resulting model,
$\theta_0$ is a singular cardinal of cofinality $\eta.$
\end{remark}


\end{document}